\documentclass[11pt,a4paper,reqno]{article}
\author{Mara Ungureanu}
\title{Refined de Jonqui\`eres divisors and secant varieties \\ on algebraic curves}
\date{}

\AtEndDocument{\medskip
\noindent\begin{footnotesize}\textsc{Albert-Ludwigs-Universit\"at Freiburg, Mathematisches Institut, Abteilung Reine Mathematik, Ernst-Zermelo-Str 1, 79104 Freiburg}
\smallskip\\
\textit{E-mail address:} \texttt{mara.ungureanu@math.uni-freiburg.de}
\end{footnotesize}}

\usepackage[style=numeric]{biblatex}
\usepackage{amsmath, amssymb, amsthm,mathrsfs}
\usepackage{enumitem}
\usepackage{color}
\usepackage{mathtools}
\usepackage[a4paper,left=3cm,right=3cm,bottom=4cm]{geometry}
\usepackage{lmodern}
\usepackage{indentfirst}
\usepackage{tikz}
\usetikzlibrary{matrix,arrows}
\usepackage{epstopdf}
\usepackage{float}
\usepackage{microtype}

\usepackage[utf8]{inputenc}
\usepackage[T1]{fontenc}

\newcommand{\grd}{g^r_d}

\newcommand{\grdop}[2]{g^{#1}_{#2}}
\DeclareMathOperator{\im}{\text{im}}

\DeclareMathOperator{\oo}{\mathcal{O}}

\DeclareMathOperator{\p}{\mathbb{P}}
\DeclareMathOperator{\pic}{\text{Pic}}

\newcommand{\Grd}{G^r_d(C)}
\newcommand{\Wrd}{W^r_d(C)}
\newcommand{\Crd}{C^r_d}

\newtheorem{thm}{Theorem}[section] 
\newtheorem{lemma}[thm]{Lemma}
\newtheorem{prop}[thm]{Proposition}
\newtheorem{cor}[thm]{Corollary}
\newtheorem*{hypo}{Induction hypothesis}

\theoremstyle{definition}
\newtheorem{defi}{Definition}[section]
\theoremstyle{definition}

\theoremstyle{remark}
\newtheorem{rem}{Remark}[section]


\addbibresource{imrn.bib}

\begin{document}

\maketitle

\begin{abstract}
The aim of this paper is to provide another perspective on secant varieties on algebraic curves by reformulating the problem in terms of refined de Jonqui\`eres divisors, that is divisors on the curve with prescribed multiplicities and dimensions of their spaces of global sections.  We are able to both recover some already known results and to obtain some new statements concerning the dimension theory of secant varieties.  We do this via the study of the dimension theory of refined de Jonqui\`eres divisors in some relevant cases and degeneration arguments.
\end{abstract}

\section{Introduction}
Secant varieties to algebraic curves are some of the most studied objects in classical algebraic geometry.  Their enumerative geometry has been of particular interest, with Castelnuovo \autocite{Cas}, Cayley \autocite{Cay}, and later MacDonald \autocite{ACGH} providing various formulas predicting the number of secant $k$-planes to a curve embedded in a projective space of some fixed dimension.  

Despite their long history, it was only quite recently that the validity of these enumerative formulas has been verified in most cases, independently by Cotterill \autocite{Co} and Farkas \autocite{Fa2}.   More precisely, for a general curve $C$ of genus $g$ equipped with a linear series $l$ of degree $d$ and dimension $r$, denote by $V_e^{e-f}(l)$ the variety of effective divisors of degree $e$ on $C$ that impose at most $e-f$ conditions on $l$.  In other words, a divisor $D$ belonging to $V_e^{e-f}(l)$ has the property that the series $l-D$ has degree $d-e$ and dimension at least $r-e+f$.  The space $V_e^{e-f}(l)$  has the structure of a determinantal subvariety of the $e$-th symmetric product $C_e$ of $C$ and as such it has an expected dimension 
\[ \text{exp}\dim V_e^{e-f}(l) = e-f(r+1-e+f).\]
In \autocite[Theorem 0.1]{Fa2} it is shown that $V_e^{e-f}(l)$ is empty for all $l$ of degree $d$ and dimension $r$ if \[\text{exp}\dim V_e^{e-f}(l)<-\rho(g,r,d),\] where $\rho(g,r,d):=g-(r+1)(g-d+r)$ is the Brill-Noether number of $l$.  As a consequence of this result, we have (cf.~\autocite[Corollary 0.3]{Fa2}) that if the variety $V_e^{e-f}(l)$ is non-empty, then it is equidimensional and of expected dimension $e-f(r+1-e+f)\geq 0$.

The issue of non-emptiness of secant varieties when the expected dimension is non-negative is unfortunately not as easily established. The cycle class of $V_e^{e-f}(l)$ computed by MacDonald displays positivity features only in the case of a non-special linear series $l$, however new results by Cotterill, He, and Zhang \autocite{CHZ} indicate that formulas with better positivity properties can be found in some cases.  An older result of Coppens and Martens \autocite[Theorem 1.2]{CM} already addresses the non-emptiness in a different context and states that if $d\geq 2e-1$ and 
\begin{equation}\label{eq:cmbound}
e-f(r+1-e+f)\geq r-e+f,
\end{equation}
then $V_e^{e-f}(l)$ is not empty, for any curve $C$ and linear series $l$.  Moreover \autocite[Theorem 0.5]{Fa2} also provides non-emptiness results for certain ranges of the parameters.

The purpose of this article is to offer another perspective on secant varieties via the notion of \textit{refined de Jonqui\` eres divisors}.

In the classical study of algebraic curves \textit{de Jonqui\`eres divisors} on a curve $C$ are divisors with prescribed multiplicities belonging to a linear series $l$ of degree $d$ and dimension $r$ (see for example \autocite[Chapter VIII, Section 5]{ACGH}).  More precisely, we say that an effective divisor $D$ belonging to $l$ is a de Jonqui\`eres divisor of length $N$ if there exist positive integers $a_1,\ldots,a_k$ and $d_1,\ldots,d_k$ with $\sum_{i=1}^k a_i d_i=d$ and $\sum_{i=1}^k d_i =N$ such that $D=\sum_{i=1}^k a_i D_i$, for some effective divisors $D_i$ of degree $d_i$.  We shall refer to those de Jonqui\`eres divisors where we fix not only the degrees $d_i$ of each of the $D_i$, but also the dimension of their spaces of global sections, as refined de Jonqui\`eres divisors.  Thus, if we additionally fix non-negative integers $r_1,\ldots,r_k\leq r$, a \textit{refined de Jonqui\` eres divisor of length} $N$ is a de Jonqui\`eres divisor $a_1 D_1 + \cdots + a_k D_k\in C_d$ of $l$ such that $h^0(C,\oo_C(D_i))\geq r_i +1$, for $i=1,\ldots,k$.  In this paper we shall investigate the geometry of spaces of refined de Jonqui\`eres divisors on a general curve, with particular emphasis on their dimension theory.  We shall use it to recover some of the already known results concerning secant varieties and to also provide some new insights into their study.

The connection between refined de Jonqui\`eres divisors and secant varieties arises from two distinct observations.  

On the one hand, consider the example found in \autocite[Remark 5.1]{Un2} that describes a situation where the expected dimension of  $V_e^{e-f}(l)$ is zero, the residual linear series $K_C-l$ is a pencil of degree $d-e$, and $r-e+f=1$, i.e.~a divisor $D\in V^{e-f}_e(l)$ imposes at most $r-1$ conditions on $l$.   Nevertheless we find that there are no divisors $D\in C_e$ satisfying $D'+D=l$, where $D'$ is an effective divisor belonging to $K_C-l$.  The situation there gives rise to a refined de Jonqui\`eres divisor $2D'+D$ of the canonical bundle $K_C$, where $h^0(C,\oo_C(D'))=2$.  Using dimension theoretical considerations, one may then easily check that the canonical bundle does not admit refined de Jonqui\`eres divisors of that type.

Inspired by this, we use the framework of refined de Jonqui\` eres divisors to produce similar examples for linear series $l$ of any dimension.  More precisely, we have
\begin{prop}\label{prop}
Let $C$ be a general curve of genus $g$ equipped with a complete linear series $l_1$ of degree $d$ and dimension $r$ with $g<d+1$ and its residual $l_2$.  There is no effective divisor $D\in C_{2g-2-2d}$ such that $l_1 + D = l_2$. 
\end{prop}

\noindent  We remark here that the example found in \autocite[Remark 5.1]{Un2} may also be recovered as a special case of Proposition \ref{prop}.  Furthermore, note that the above result does not imply that the corresponding secant variety $V^{g-d-1}_{2g-2-2d}(l_2)$ is empty within the stated parameter bound.  It does however emphasise the point that the expected non-emptiness of secant varieties cannot be used as a test for the existence of effective divisors $D$ satisfying $l_1+D=l_2$, for some fixed linear series $l_1$ and $l_2$.

On the other hand, suppose the linear series $l$ admits a refined de Jonqui\`eres divisor
$D' + D$, where $D\in C_e$, $D'\in C_{d-e}$, and $h^0(C,\oo_C(D')\geq r+1-e+f$.  This in turn means that $D$ is an element of the secant variety $V_e^{e-f}(l)$.  Hence, the non-emptiness of the space of such refined de Jonqui\`eres divisors implies the non-emptiness of the corresponding secant variety.  In this way we are able to establish the following 
\begin{thm}\label{thm1}
Let $C$ be a general curve of genus $g$ and $l$ a complete linear series of degree $d$ and dimension $r$.  Assume furthermore that $e,f$ are non-negative integers with $0\leq f<e\leq d$.  If
\[e-f(r+1-e+f)\geq (g-d+r-1)(r-e+f),\]  
then the secant variety $V_e^{e-f}(l)$ is non-empty.
\end{thm}

\noindent This result recovers some already known existence statements about secant varieties, and it provides some new (to us) instances of the non-emptiness of secant varieties when their expected dimension satisfies the lower bound from Theorem \ref{thm1}.  For example, when $g-d+r=1$ we recover one of the few cases where the MacDonald formula is easily seen to be positive, while for $g-d+r=2$ we are in the situation of the Coppens-Martens result mentioned above.

Finally, as we explained above, if $D$ is an effective divisor of degree $e\leq d$ such that $D' + D$ is a refined de Jonqui\`eres divisor with $h^0(C,\oo_C(D'))\geq r-e+f$, then $D$ also belongs to $V_e^{e-f}(l)$.  Elements $D$ of the secant variety $V_e^{e-f}(l)$ that originate from refined de Jonqui\`eres divisors give rise to a distinguished subvariety $\widetilde{V}_e^{e-f}(l)$ of $V_e^{e-f}(l)$ and we are establish the following

\begin{thm}\label{thm2}
Fix a general curve $C$ of genus $g$ equipped with a complete general linear series $l$ of degree $d$ and dimension $r$.  Assume also that $e,f$ are non-negative integers with $0\leq f<e\leq d$ and that $e-f(r+1-e+f)<\min(r-e+f+2,(g-d+r-1)(r-e+f))$. If
\begin{enumerate}
	\item $\rho(g,r,d)=0$ and $e<2r$ or if
	\item $\rho(g,r,d)>0$ and $e<r$, 
\end{enumerate}
then the subvariety $\widetilde{V}_e^{e-f}(l)$ of $V_e^{e-f}(l)$ is empty.
\end{thm}

\noindent The theorem is a direct consequence of the result concerning the expected dimension of the space of refined de Jonqui\`eres divisors of the type $D'+D$, with $D\in C_e$ and $D'\in C_e$ with $h^0(C,\oo_C(D'))\geq r+1-e+f$.

The paper is organised as follows.  In Section \ref{sec:definitions} we lay out the precise definitions of the objects of interest for this article.  We begin the study of the space of refined de Jonqui\`eres divisors in Section \ref{sec:refineddej} and we state the main results concerning its dimension.  In Section \ref{sec:app} we explain how to use refined de Jonqui\`eres divisors to make statements about secant varieties.  Thus, section \ref{sec:proofprop} is dedicated to the proof of Proposition \ref{prop} and Section \ref{sec:thm1} to the proof of Theorem \ref{thm1}.  Finally, in Section \ref{sec:degind} we use induction and degeneration techniques to study the dimension theory of the space of refined de Jonqui\`eres in certain relevant cases which then directly yields the content of Theorem \ref{thm2}.

\section*{Acknowledgements}
I am grateful to Ethan Cotterill and Gavril Farkas for pointing out several inaccuracies in an earlier version of this manuscript.

\section{Definitions and conventions}\label{sec:definitions}
Let $C$ be a smooth projective curve of genus $g$.  For a divisor $D$ on $C$, let $|D|$ be the linear series of all effective divisors linearly equivalent to $D$ and $s(D)$ be the index of speciality of $D$, equal to $g-d+\dim |D|$. Denote by $\Grd$ the space parametrising linear series of type $\grd$ on $C$, i.e.
\[ \Grd:=\{ l=(L,V) \mid L\in \pic^d(C), V\in Gr(r+1,H^0(C,L)) \}. \]
We focus on the case of Brill-Noether general curves, which means that $\Grd$ is a smooth variety of expected dimension given by the Brill-Noether number $\rho(g,r,d)$.  Furthermore, call the following sequence
\[ 0\leq a_0(l,p)<a_1(l,p)<\cdots<a_r(l,p)\leq d, \] 
where the $a_i(l,p)$ are the orders with which non-zero sections of $l$ vanish at a point $p\in C$ \textit{the vanishing sequence of $l$ at a point} $p$.  Associated to it we also have the \textit{ramification sequence of} $l$ at $p$:
\[\alpha(l,p):= 0\leq \alpha_0(l,p)\leq\alpha_1(l,p)\leq\cdots\leq\alpha_r(l,p)\leq d-r, \]
where $\alpha_i(l,p)=a_i(l,p)-i$.
  For a general pointed curve $(C,p)$, the variety $G^r_d(C;\alpha(l,p))$ parametrising $\grd$-s $l$ with prescribed ramification sequence at least $\alpha(l,p)$ at $p$ is also reduced and of expected dimension given by the adjusted Brill-Noether number $\rho(g,r,d)-\sum_{i=0}^r \alpha_i(l,p)$.

Let $\Crd$ be the subvariety of the symmetric product $C_d$ parametrising effective divisors moving in a linear series of dimension at least $r$:
\[ \Crd = \{ D\in C_d \mid \dim |D| \geq r \}. \]
It is a well-known fact (cf.~\autocite[Chapter IV]{ACGH}) that for a general curve, the variety $\Crd$ is of expected dimension $\rho(g,r,d)+r$ at a point $D\in \Crd\setminus C_d^{r+1}$.
 
As mentioned in the Introduction, $V_e^{e-f}(l)$ is the variety of effective divisors $D\in C_e$ imposing at most $e-f$ conditions on $l$, i.e.
\[ V_e^{e-f}(l) = \{ D\in C_e \mid \dim (l-D)\geq r-e+f \}\subset C_e, \]
where $l-D$ denotes the linear series $\bigl(L\otimes\oo_C(-D),V\cap H^0(C,L\otimes\oo_C(-D))\bigr)$ and $f<e$ are non-negative integers smaller than $d$.

Lastly, let $C$ be a smooth curve of genus $g$ equipped with a linear series $l=(L,V)\in\Grd$ and $r_1,\ldots, r_k\leq r$ non-negative integers, for some positive integer $k\leq d$.  A \textit{refined de Jonqui\`eres divisor of length} $N$ is a divisor $a_1 D_1 + \cdots + a_k D_k \in C_d$ such that
\[ a_1 D_1 + \cdots + a_k D_k \in \p V, \]
where the $D_i \in C_{d_i}^{r_i}$ and $\sum_{i=1}^k d_i = N$.  

Now fix the following vectors of non-negative integers $\mu=(r_1,\ldots,r_k)$, $\nu=(a_1,\ldots,a_k)$, and $\lambda=(d_1,\ldots,d_k)$ satisfying $\sum_{i=1}^k a_i d_i=d$, $\sum_{i=1}^k d_i = d$ and $r_i\leq r$ for all $i=1,\ldots,k$.  Denote by $DJ_{\lambda,\mu,\nu}(C,l)$ the space of refined de Jonqui\`eres divisors of length $N$ belonging to $l$ and determined by the three vectors $\lambda,\mu,\nu$.
Note that if $r_i=0$ for all $i=1,\ldots,k$, we recover the usual de Jonqui\`eres divisors of length $N$.  Furthermore, for a general curve $C$, one must choose the parameters in such a way that the inequality $\rho(g,r_i,d_i)\geq 0$ is satisfied for all $i=1,\ldots,k$.

\section{Dimension theory of refined de Jonqui\`eres divisors}\label{sec:refineddej}
As usual, let $C$ be a general curve of genus $g$ with a linear series $l=(L,V)\in\Grd$.
We begin the study of the space of refined de Jonqui\`eres divisors $DJ_{\lambda,\mu,\nu}(C,l)$ by first expressing it as a degeneracy locus of a vector bundle map over $C_d$.  Let $\mathcal{E}=\oo_{C_d}\otimes V$ be the trivial vector bundle.  Furthermore, let $\mathcal{F}_d(L)=\tau_* (\sigma^*L \otimes \oo_{\mathcal{U}})$ be the $d$-th secant bundle.

Consider the restriction map $\sigma^* L\rightarrow\sigma^* L\otimes \oo_{\mathcal{U}}$ and its pushdown via $\tau$ to $C_d$.  After intersecting with $\p V$, we obtain the desired vector bundle morphism which we denote by $\Phi:\mathcal{E}\rightarrow\mathcal{F}_d(L)$.

Furthermore set
\[ \Delta_{\lambda,\mu,\nu} := \{ E\in C_d \mid E=\sum_{i=1}^k a_i D_i \text{ for some }D_1\in C_{d_1}^{r_1},\ldots, D_k \in C_{d_k}^{r_k} \}\subset C_d. \]
It is easy to see that $\dim \Delta_{\lambda,\mu,\nu} \geq \sum_{i=1}^k (\rho(g,r_i,d_i) + r_i)$.  If moreover each $D_i$ belongs to $C^{r_i}_{d_i}\setminus C_{d_i}^{r_i+1}$, then 
\[ \dim \Delta_{\lambda,\mu,\nu} = \sum_{i=1}^k (\rho(g,r_i,d_i) + r_i). \]

Putting everything together, we see that $DJ_{\lambda,\mu,\nu}(C,l)$ is the degeneracy locus of the vector bundle map $\Phi$ over $\Delta_{\lambda,\mu,\nu}$.  Thus, if $DJ_{\lambda,\mu,\nu}(C,l)$ is non-empty, then for every point $D\in DJ_{\lambda,\mu,\nu}(C,l)$ we have that
\begin{equation}\label{eq:dimboundfirst}
\dim_D DJ_{\lambda,\mu,\nu}(C,l) \geq  \dim \Delta_{\lambda,\mu,\nu} -d+r.
\end{equation} 

We focus on the relevant case for our problem, namely
\[ \Delta_{\lambda,\mu,\nu} = \{ E\in C_d \mid E=D+D' \text{ for some }D\in C_e \text{ and }D'\in C_{d-e}^{r-e+f} \}, \] 
where now $\lambda=(e,d-e)$, $\mu=(0,r-e+f)$, and $\nu=(1,1)$.  To ease notation, from now on we shall refer to the space $\Delta_{\lambda,\mu,\nu}$ with this particular choice of parameters as $\Delta_{e,f}^{r,d}$ and the corresponding space of refined de Jonqui\`eres divisors as $DJ_{e,f}(C,l)$.
From the discussion in the Introduction it follows that if $DJ_{e,f}(C,l)$ is non-empty, then so is $V_e^{e-f}(l)$. 

Furthermore, \autocite[Chapter IV, Lemma 1.7]{ACGH} shows that no component of $\Crd$ is entirely contained in $C^{r+1}_d$.  We are thus interested in the dimension theory of the space $DJ_{e,f}(C,l)$ at a point $E=D+D'$, where $D\in C_e$ and $D'$ belongs to a linear series $l'\in G^{r-e+f}_{d-e}(C)$.  In this case, the expected dimension of $DJ_{e,f}(C,l)$ at a point $D$ is given by
\begin{equation}\label{eq:expdim}
\dim_D DJ_{e,f}(C,l) \geq  e-f(r+1-e+f)-(s-1)(r-e+f),
\end{equation} 
where $s$ denotes the index of speciality of $l$.

A large part of the paper is dedicated to proving the following dimensionality result.

\begin{thm}\label{thm:refineddjdim}
Fix a general curve $C$ of genus $g$ and a general complete linear series $l\in\Grd$ with index of speciality $s$.  Let $e,f$ be non-negative integers with $0\leq f<e\leq d$ and $e-f(r+1-e+f)\leq r-e+f+2$.  If
\begin{enumerate}
	\item $\rho(g,r,d)=0$ and $e<2r$ or if
	\item $\rho(g,r,d)>0$ and $e<r$,
\end{enumerate}
and if moreover the space $DJ_{e,f}(C,l)$ is non-empty, then it is of expected dimension $e-f(r+1-e+f)-(s-1)(r-e+f)$.
\end{thm}

\noindent We immediately obtain the following

\begin{cor}\label{cor:refineddjnonex}
Fix a general curve $C$ of genus $g$ and a general complete linear series $l\in\Grd$ with index of speciality $s$.   Let $e,f$ be non-negative integers with $0\leq f<e\leq d$ and $e-f(r+1-e+f)\leq r-e+f+2$.  If  
\begin{enumerate}
	\item $\rho(g,r,d)=0$ and $e<2r$ or if
	\item $\rho(g,r,d)>0$ and $e<r$,
\end{enumerate}
and if $e-f(r+1-e+f)<\min((s-1)(r-e+f),r-e+f+2)$, then $DJ_{e,f}(C,l)$ is empty.
\end{cor}
The statement of Theorem \ref{thm1} is a direct consequence of the above Corollary and the discussion above.   

The strategy for the proof of Theorem \ref{thm:refineddjdim} is to first consider the case of the canonical linear series $K_C$ and that of non-special linear series.  This is done in Section \ref{sec:transvers} below, by means of a tangent space computation.  We then use this as a base case for an induction argument that we explain in Section \ref{sec:degind}.  Along the way we describe what happens to a family of refined de Jonqui\`eres divisors that degenerates to a nodal central fibre.  We focus in particular on the case of a nodal curve of compact type with one node.  We remark here that the bounds $e<2r$ and $e<r$ are artefacts of the induction step and the particular degeneration that we use.

Finally, to obtain Corollary \ref{cor:refineddjnonex} we need to establish the emptiness of the space $DJ_{e,f}(C,l)$ when $l$ is a non-special linear series (so with $s=0$) and when $l=K_C$.  The latter is discussed at the beginning of the next section, while for the former we have:
\begin{lemma}\label{lemma:nonspecial}
Fix a general curve $C$ of genus $g$, a linear series $l\in\Grd$ with index of speciality $s=0$, and non-negative integers $e,f$ with $0\leq f<e\leq d$.  If $e-f(r+1-e+f)+(r-e+f)<0$, then $DJ_{e,f}(C,l)$ is empty. 
\end{lemma}
\begin{proof}
Consider the Abel-Jacobi map $u: \Delta_{e,f}^{r,d}\rightarrow \pic^d(C)$, $E\mapsto \oo_C(E)$.  Since $s=0$, we have $\pic^d(C)=\Wrd$.  Moreover, $\im u$ is a closed subset of $\pic^d(C)$ and $\dim \im u \leq \dim\Delta_{e,f}^{r,d}$.  Thus if $\dim \Delta^{r,d}_{e,f} <g=d-r$, then a general $L\in pic^d(C)$ (i.e. such that $s=0$) does not lie in the image of $u$ and hence its corresponding space of refined de Jonqui\`eres divisors $DJ_{e,f}(C,L)$ is empty.  One then easily verifies that  $\dim \Delta^{r,d}_{e,f} <d-r$ is equivalent to the inequality in the statement of the lemma. 
\end{proof}

\subsection{Tangent space of refined de Jonqui\`eres divisors}\label{sec:transvers}
We begin with an elementary argument which shows that Theorem \ref{thm:refineddjdim} holds for the canonical linear series $K_C$.  We then prove that in fact $DJ_{\lambda,\mu,\nu}(C,K_C)$ is always of expected dimension by looking at its tangent space.

Consider the special case of refined de Jonqui\`eres divisors corresponding to $l=K_C$, $\lambda=(d-d_2,d_2)$, $\mu=(0,r_2)$, and $\nu=(1,1)$, that is
\begin{equation}\label{def:delta}
\Delta_{\lambda,\mu,\nu} = \{ E\in C_d \mid E=D_1 + D_2, \text{ for some }D_1\in C_{d-d_2}, D_2\in C^{r_2}_{d_2} \}.
\end{equation} 
From Riemann-Roch we immediately obtain that $\dim |D_1| = g-d_2+r_2-1$.  Thus, up to linear equivalence, we get that for each $D_2\in C^{r_2}_{d_2}$, there is a $(g-d_2+r_2-1)$-dimensional family of divisors $D_1$ such that $D_1+D_2=K_C$.  Thus in this case we have
\begin{align*}
\dim DJ_{\lambda,\mu,\nu}(C,K_C) &= \rho(g,r_2,d_2) + g-d_2 +r_2 -1\\
&= (\rho(g,r_2,d_2) +r_2) + (d-d_2) - d +r,
\end{align*}
which is exactly the expected dimension of the space of refined de Jonqui\`eres divisors from (\ref{eq:dimboundfirst}), with $d=2g-2$ and $r=g-1$.  Thus Theorem \ref{thm:refineddjdim} holds for $K_C$ and we may also at the same time conclude that if the expected dimension is negative then the space $DJ_{\lambda,\mu,\nu}(C,K_C)$ is empty. 

\begin{rem}\label{rem:ramif}
Assume now that $D_2$ belongs to a linear series $l_2$ of type $\grdop{r_2}{d_2}$ with prescribed ramification sequence at least $\alpha(l_2,p)$ at a general point $p\in C$.  By appropriately modifying the expression in (\ref{def:delta}) to include this further restriction, we see that the dimension of the space $DJ_{\lambda,\mu,\nu,\alpha}(C,K_C)$ of refined de Jonqui\`eres divisors with prescribed ramification at a general point $p\in C$ for $K_C$ is
\begin{equation}\label{eq:dimprescribedramif}
\dim DJ_{\lambda,\mu,\nu,\alpha}(C,K_C)=(\rho(g,r_2,d_2) -\sum_{i=0}^{r_2}\alpha_i(l_2,p) +r_2) + (d-d_2) - d +r.
\end{equation}
\end{rem}

\begin{rem}\label{rem:2}
In proving Theorem \ref{thm:refineddjdim} we shall in fact need to consider refined de Jonqui\`eres divisors of type $DJ_{e,f}(C,l)$ where the corresponding linear series $l'\in G^{r-e+f}_{d-e}(C)$ has ramification at least $\alpha$ at a general point $q\in C$.  From now on we denote the space of such refined de Jonqui\`eres divisors by $DJ_{e,f,\alpha}(C,l)$ and note that its expected dimension is given by
\[  \dim DJ_{e,f,\alpha}(C,l) \geq e-f(r+1-e+f)-(s-1)(r-e+f)-\sum_{i=0}^{r-e+f}\alpha_i.\]  
\end{rem}

We expect that $DJ_{\lambda,\mu,\nu}(C,K_C)$ is of expected dimension for a more general choice of parameters $\lambda$, $\mu$, and $\nu$ than the one appearing in Theorem \ref{thm:refineddjdim}.  This is indeed what we find by means of a tangent space computation, which we now discuss.

Let $l\in\Grd$ be a complete linear series and let $D\in C_d$ be a divisor such that $|D|=l$. We can rewrite $DJ_{\lambda,\mu,\nu}(C,l)$ as the intersection
\[ DJ_{\lambda,\mu,\nu}(C,l) = |D| \cap \Delta_{\lambda,\mu,\nu}. \]
Thus, the space $DJ_{\lambda,\mu,\nu}(C,l)$ is smooth and of expected dimension at a point $D$ if and only if the above intersection is transverse, i.e.
\[ T_D C_d = T_D |D| + T_D \Delta_{\lambda,\mu,\nu}. \]

Recall that $T_D C_d = H^0(C,\oo_D(D))$ (see for example \S 1, Chapter IV of \cite{ACGH}).  Its dual is $T_D^{\vee}C_d = H^0(C,K_C/K_C(-D))$ and the pairing between the tangent and cotangent space is given by the residue.  From now on we shall use the superscript $^{\perp}$ to denote orthogonality with respect to this pairing.  Moreover, we have that
\[ T_D |D| = \ker \delta = \im(\alpha)^{\perp}, \]
where $\delta:\im(\alpha\mu_0)^{\perp}\rightarrow \im(\mu_0)^{\perp}$ is the differential of the restriction of the Abel-Jacobi map $u:\Crd\rightarrow\Wrd$, the map $\alpha:H^0(C,K_C)\rightarrow H^0(C,K_C\otimes\oo_D)$ is the restriction, and 
\[ \mu_0:H^0(C,K_C-D)\otimes H^0(C,\oo_C(D))\rightarrow H^0(C,K_C) \]
is the cup-product mapping.
We also used the fact that $T_D C^r_d = \im(\alpha\mu_0)^{\perp}$ and denoted by $\Wrd$ the subvariety of $\pic^d(C)$ parametrising complete linear series of degree $d$ and dimension at least $r$, that is $\Wrd=\{ L\in\pic^d(C) \mid h^0(C,L)\geq r+1 \}$.
To establish the expression for the tangent space of $\Delta_{\lambda,\mu,\nu}$, let $\mathcal{D}_i$ denote the diagonal in the $a_i$-th product $C^{r_i}_{d_i}\times\cdots C^{r_i}_{d_i}$.  Thus $\Delta_{\lambda,\mu,\nu}=\mathcal{D}_1\times\cdots\times\mathcal{D}_k / S_d$ and so
\begin{align*}
 T_D \Delta_{\lambda,\mu,\nu} &= T_{D_1} C_{d_1}^{r_1} \oplus \cdots \oplus T_{D_k} C_{d_k}^{r_k} \\
 &=\im(\alpha_{1}\mu_{01})^{\perp_1}\oplus\cdots\oplus \im(\alpha_k\mu_{0k})^{\perp_k},
\end{align*}
where $\mu_{0i}$ and $\perp_i$ denote the cup-product mapping and orthogonality with respect to the residue pairing corresponding to each divisor $D_i$, for $i=1,\ldots,k$. 
Thus the transversality condition becomes
\begin{equation}\label{eq:transcond1}
\im(\alpha_{1}\mu_{01})^{\perp_1}\oplus\cdots\oplus \im(\alpha_k\mu_{0k})^{\perp_k}+\im(\alpha)^{\perp} = T_D C_d.
\end{equation}
Note that if $|D|=K_C$, then condition (\ref{eq:transcond1}) is immediately satisfied. 
We have therefore proved the following:
\begin{lemma}\label{lemma:trans}
Fix a general curve $C$ with a linear series $l$.  If $l=K_C$ or $l$ is non-special, then if $DJ_{\lambda,\mu,\nu}(C,l)$ is non-empty, then it is smooth and of expected dimension. 
\end{lemma}

\section{Applications to secant varieties}\label{sec:app}
This section is dedicated to explaining how one may use refined de Jonqui\`eres divisors in order to extract information about secant varieties on algebraic curves.

Note that the multiplication map
\begin{align*}
\epsilon:C^{r-e+f}_{d-e}\times C_e &\rightarrow \Delta_{e,f}^{r,d}\\
(D,D')&\mapsto D+D',
\end{align*} is a finite morphism.  Hence by projecting the preimage $\epsilon^{-1}(\Delta_{e,f}^{r,d}\cap |l|)$ to $C_e$ we obtain a subvariety $\widetilde{V}_e^{e-f}(l)$ of $V_e^{e-f}(l)$ of dimension equal to at most the dimension of $DJ_{e,f}(C,l)$.

At this point it is also important to understand to what extent the varieties $V_e^{e-f}(l)$ and $V_e^{e-f-1}(l)$ are different.  More precisely, we have the following
\begin{lemma}
Let $l\in\Grd$ be a complete linear series on a general curve $C$.  Then no component of $V_e^{e-f}(l)$ is entirely contained in $V_e^{e-f-1}(l)$.
\end{lemma}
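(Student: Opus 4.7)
My plan is to derive the lemma as a direct consequence of Farkas's equidimensionality theorem (Corollary 0.3 of the cited work), applied to both $V_e^{e-f}(l)$ and $V_e^{e-f-1}(l)$, followed by a short dimension count. The key point is that, on a general curve, whenever these varieties are non-empty their dimensions are forced to equal their expected values, and a direct computation shows that these two expected dimensions differ by a strictly positive quantity.

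Concretely, I would first apply Farkas's corollary with the given $f$ and then again with $f$ replaced by $f+1$, obtaining equidimensionality together with expected dimensions $e - f(r+1-e+f)$ and $e - (f+1)(r-e+f+2)$ respectively. A straightforward calculation yields the difference
\[
[e - f(r+1-e+f)] - [e - (f+1)(r-e+f+2)] = r - e + 2f + 2,
\]
which is strictly positive under the natural non-triviality assumption $r - e + f \geq 0$ (outside of which $V_e^{e-f}(l)$ degenerates to $C_e$ and the statement is vacuous). The conclusion then follows by contradiction: if an irreducible component $Z$ of $V_e^{e-f}(l)$ were entirely contained in $V_e^{e-f-1}(l)$, then the latter would be non-empty and Farkas's corollary would furnish the equidimensionality and expected dimension of both varieties; this would force $\dim Z = e - f(r+1-e+f)$ on the one hand and $\dim Z \leq e - (f+1)(r-e+f+2)$ on the other, contradicting the positivity just established.

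The main obstacle is conceptual rather than technical: recognising that the lemma is essentially an immediate consequence of Farkas's theorem, and carefully addressing edge cases. In particular, if $V_e^{e-f-1}(l)$ happens to be empty then the lemma is trivially true, and if the parameters fall outside the natural range $r-e+f \geq 0$ the statement must be interpreted appropriately; in all other cases the dimension inequality above closes the argument without further work.
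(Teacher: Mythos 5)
Your dimension count is correct, and on the range where Farkas's equidimensionality result genuinely applies your argument does close quickly; but the proposal hinges entirely on applying \autocite[Corollary 0.3]{Fa2} to an \emph{arbitrary} complete linear series on a general curve, and that is a genuine gap. The statement ``non-empty implies equidimensional of expected dimension $\geq 0$'' cannot hold for every $l\in\Grd$ once $\rho(g,r,d)>0$: take $\rho(g,r,d)\geq r+1$, so that a general curve carries a complete series $l=|L'+p|$ with base point $p$ and $L'\in W^r_{d-1}(C)$; then for $e=2$, $f=1$ the locus $V_2^1(l)$ contains the one-dimensional family $p+C$, while its expected dimension $2-r$ is negative for $r\geq 3$. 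So the corollary is only available either for $\rho(g,r,d)=0$ or for a general member of $\Grd$, whereas the lemma is invoked in the paper precisely to finish the proof of Proposition \ref{prop}, where $\rho(g,r,d)=\rho$ is an arbitrary positive integer and the Petri-map argument there is carried out for every complete $l$. In that range your appeal to the corollary, for $V_e^{e-f}(l)$ and a fortiori for $V_e^{e-f-1}(l)$, is not justified.

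The paper avoids this by giving a self-contained moving-point argument modelled on \autocite[Chapter IV, Lemma 1.7]{ACGH}: for $D$ a general point of a component with $\dim(l-D)>r-e+f$ and $p,q\in C$ general, Riemann--Roch produces an effective divisor $D'\sim D-p+q$ lying in the same component and still satisfying $\dim(l-D')\geq r-e+f$, contradicting the generality of $D$. This uses nothing beyond the generality of the curve and is valid for any $\rho(g,r,d)$. To repair your approach you would either have to restrict to $\rho(g,r,d)=0$, which is insufficient for the lemma's application, or establish the equidimensionality statement for special $l$, which is essentially the content you are trying to avoid proving.
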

\begin{proof}
Out proof is similar to that of \autocite[Chapter IV, Lemma 1.7]{ACGH}.  Let $D$ be a general point of an irreducible component of $V_e^{e-f}(l)$ and assume towards a contradiction that $\dim (l-D)>r-e+f$.  Let $q$ be a general point of $C$.  Then there exists a divisor $E\in C_{d-e-1}$ such that $E+q \in l-D$.  Moreover, since $q$ is general (and thus not a base point of $l-D$), we obtain $\dim |E| = \dim (l-D)-1\geq r-e+f$.  We therefore have that 
\[ s(E)=g-(d-e-1)+(\dim (l-D)-1)=g-d+\dim(l-D)+e\geq g-d+r+f, \]
where $s(E)$ is the index of speciality of $|E|$.  Hence $K_C-E$ is effective.  Furthermore, if $p$ is another general point of $C$, then $p$ is not a base point of $|K_C-E|$ so that $s(E+p)=s(E)-1$.  Riemann-Roch then yields $\dim|E+p| = \dim |E| =\dim(l-D)-1\geq r-e+f$.  Next, observe that $E+p\sim l-D+p-q$.  Let $D'$ be an effective divisor linearly equivalent to $D-p+q$.  By the generality of $p$ and $q$, $D'$ belongs to the same irreducible component of $V_e^{e-f}(l)$ as $D$ and moreover $\dim (l-D')\geq r-e+f$, which contradicts the generality of $D$.
\end{proof}

Thus it is sensible to consider, as we have done already in Section \ref{sec:refineddej} the dimension theory of $DJ_{e,f}(C,l)$ at a point $E=D+D'$, where $D\in C_e$ and $D'\in C^{r-e+f}_{d-e}\setminus C^{r+1-e+f}_{d-e}$.

In what follows we shall prove Proposition \ref{prop} and Theorem \ref{sec:thm1} using the above considerations and the transversality result in Lemma \ref{lemma:trans}.  The proof of Theorem \ref{thm:refineddjdim} (and hence that of Theorem \ref{thm2}) requires a degeneration argument which is described in Section \ref{sec:degind}.

\subsection{Proof of Proposition \ref{prop}}\label{sec:proofprop}
As mentioned in the Introduction, the starting point for the present discussion is the example found in \autocite[Remark 5.1]{Un2}.  Using the notation of Proposition \ref{prop}, the situation there can be described as follows: $l_1\in G^1_d(C)$ is a minimal pencil, i.e.~$\rho(g,1,d)=1$.  This means that the general curve $C$ must have genus $g=2d-3$, $l_2\in G^{d-3}_{3d-8}(C)$, and $K_C$ is of type $\grdop{2d-4}{4d-8}$.  Let $D'$ be an effective divisor such that $|D'|=l_1$.  The idea is that, if there exists an effective divisor $D\in C_{2d-8}$ with $l_1+D=l_2$, then $D=K_C-2D'\geq 0$.  Consider now the Petri map
\[ \mu_0:H^0(C,D')\otimes H^0(C,K_C-D')\rightarrow H^0(C,K_C). \]
The base-point-free pencil trick yields that its kernel is given by
$H^0(C,K_C-2D')$. 
Since for a general curve $\mu_0$ is injective, we have that $h^0(C,K_C-2D')=0$, from which we conclude that $h^0(C,D)=0$.  But this contradicts the effectiveness of $D$, which means that such a divisor $D$ cannot exist.

To prove Proposition \ref{prop}, assume from now on that $l_1\in\Grd$ with $r\geq 1$.  As in the case of pencils, if there exists an effective divisor $D$ satisfying $l_1+D=l_2$, where $l_2=K_C-l_1$, then $K_C=2D'+D$, where $D\in C_{2g-2d-2}$ and $D'\in C_d$ and with $|D'|=l_1$.  Hence $2D'+D\in DJ_{\lambda,\mu,\nu}(C,K_C)$, where $\lambda=(2g-2d-2,d)$, $\mu=(r,0)$, and $\nu=(2,1)$.
	
From Lemma \ref{lemma:trans} we know that, if non-empty, the space of refined de Jonqui\`eres divisors $DJ_{\lambda,\mu,\nu}(C,K_C)$ is of expected dimension, i.e.
\begin{align*}
\dim DJ_{\lambda,\mu,\nu}(C,K_C) &= (\rho(g,r,d)+r) + (2g-2d-2) - (2g-2) +(g-1)\\
&=(g-d-1)(1-r) - r(r+1).
\end{align*} 
Lemma \ref{lemma:trans} also allows us to conclude that, if $(g-d-1)(1-r) - r(r+1)\geq 0$, then $DJ_{\lambda,\mu,\nu}(C,K_C)$ is non-empty.
It is however not difficult to check that the inequality is never satisfied.  For $r=1$, this is immediately clear.  If $r>1$, then the inequality is equivalent to 
\[ \frac{r(r+1)}{r-1}\leq d+1-g \]
which translates to
\[ \frac{2r}{r-1}\leq 1-s, \]
where, as usual, $s=g-d+r$ is the index of speciality of $l_1$.  However, the above inequality can never be satisfied for $s\geq 0$ and $r>1$.

Hence $\dim DJ_{\lambda,\mu,\nu}(C,K_C)<0$ and we therefore expect that it is empty.  We establish that it is indeed the case for $g< d+1$.  By construction $l_2=K_C-l_1=l_1+D$.  We may rewrite this as 
\[ \grdop{s-1}{2g-2-d} = \grd + D. \]
Note that if $g=d+1$, then $s-1=r$ and $2g-2-d=d$, which yields $D=0$.  The divisors $D'$ are therefore theta characteristics and we recover the fact that there are finitely many odd ones on a general curve. 

Now consider the secant variety relevant to our situation: we have that $l_2$ is a linear series of type $\grdop{g-d+r-1}{2g-2-d}$ and $e=\deg D=2g-2d-2$, which means that
\[f=r+e-(g-d+r-1)=g-d-1.\]
Thus the relevant secant variety is $V_{2g-2d-2}^{g-d-1}(l_2)$ which (if it is non-empty) is of expected dimension
\[\text{exp}\dim V_{2g-2d-2}^{g-d-1}(l_2) = (g-d-1)(1-r)\geq 0.\]
Note that for this to make sense, we must have $g\leq d+1$.
Thus we have shown that when $g<d+1$ and therefore $(g-d-1)(1-r)>0$
holds, the above secant variety is (expected to be) non-empty, but we find examples of linear series $l_1\in\Grd$ and $l_2=K_C-l_1$ such that there are no effective divisors $D$ satisfying $l_1 + D = l_2$.

\subsection{Proof of Theorem \ref{thm1}}\label{sec:thm1}
From the discussion at the beginning of Section \ref{sec:app}, we know that if we can establish the non-emptiness of the subvariety of $\widetilde{V}_e^{e-f}(l)$, then the non-emptiness of $V_e^{e-f}(l)$ follows automatically.  We do this by reducing the problem to the case when $l=K_C$.

More precisely, assume $l\in\Grd$ is a complete linear series with index of speciality $s=g-d+r$.  Then $l$ has a residual linear series $K_C-l\in G^{s-1}_{2g-2-d}$.  Let $E$ be an effective divisor in $|K_C - l|$.  Hence if $D+D'\in DJ_{e,f}(C,l)$ such that $D'\in C^{r-e+f}_{d-e}\setminus C^{r-e+f+1}_{d-e}$, then 
\[ D'+D+E=K_C. \]
Thus $D'+D+E$ is now a refined de Jonqui\`eres divisor corresponding to $K_C$, that is $D'+D+E\in DJ_{e',f'}(C,K_C)$, where
\begin{align*}
&e'=e+2g-2-d\\
&f'=f+s-1.
\end{align*}
From Lemma \ref{lemma:trans} we have that
\begin{align*}
\dim DJ_{e',f'}(C,K_C)&=e+2g-2-d-(f+s-1)(r+1-e+f)\\
&=e-f(r+1-e+f)-(s-1)(r-e+f)-(s-1)+(2g-2-d)
\end{align*}  
Note that the terms $s-1$ and $2g-2-d$ account for the presence of $E$ in the refined de Jonqui\`eres divisors under consideration.  We conclude that if the space $DJ_{e',f'}(C,K_C)$ is non-empty, then so is $DJ_{e,f}(C,l)$.  We then use Lemma \ref{lemma:trans} for $K_C$ once more to see that, indeed, if $e-f(r+1-e+f)-(s-1)(r-e+f)\geq 0$, then the space $DJ_{e,f}(C,l)$ is not empty.

\section{Degeneration and induction argument}\label{sec:degind}
This section is dedicated to the study of degenerations of de Jonqui\`eres divisors in the simple case of families of curves where the special fibre is a reducible curve of compact type.  To keep notation to a minimum, we moreover restrict to the case of refined de Jonqui\`eres divisors corresponding to $\Delta_{e,f}^{r,d}$.  We discuss this in Section \ref{sec:degeneration} after a brief review in Section \ref{sec:llsreview} of the limit linear series tools that are needed.  We then use this framework to prove Theorem \ref{thm:refineddjdim} in Section \ref{sec:proofrefineddjdim}.

\subsection{Limit linear series review}\label{sec:llsreview}
Let $\pi:\mathscr{X}\rightarrow B$ be a family of curves of genus $g$ that has a section $\sigma:B\rightarrow \mathscr{X}$ and where $B=\text{Spec}(R)$ for some discrete valuation ring $R$ with uniformising parameter $t$.  Moreover, let $\mathscr{X}$ be a nonsingular surface, projective over $B$ and let $0\in B$ denote the point corresponding to the maximal ideal of $R$ and $\eta$ the geometric generic point of $B$.  Furthermore assume that the special fibre $\mathscr{X}_0$ is a reduced curve of compact type, while the generic fibre is a smooth, irreducible curve. 

After (possibly) performing a base change, a series $(\mathscr{L}_{\eta},\mathscr{V}_{\eta})$  of type $\grd$ on $\mathscr{X}_{\eta}$ determines a $\grd$ on each irreducible component $Y$ of $\mathscr{X}_0$ as follows: by the smoothness of $\mathscr{X}$, $\mathscr{L}_{\eta}$ extends to a (unique up to tensoring with a Cartier divisor supported on $\mathscr{X}_0$) line bundle on $\mathscr{X}$.  Denote by $\mathscr{L}_Y$ the extension of $\mathscr{L}_{\eta}$ with the property that $\deg(\mathscr{L}_Y|_Y)=d$ and $\deg(\mathscr{L}_Y|_Z)=0$ for any irreducible component $Z\neq Y$ of $\mathscr{X}_0$.  Set $\mathscr{V}_Y:=(\mathscr{V}_{\eta}\cap\pi_* \mathscr{L}_Y)\otimes k(0)$.  One sees that
\[ \mathscr{V}_Y \simeq \pi_* \mathscr{L}_Y \otimes k(0) \subset H^0(\mathscr{X}_0,\mathscr{L}_Y |_{\mathscr{X}_0}) \]
is a vector space of dimension $r+1$ which we shall identify with its image in $H^0(Y,\mathscr{L}_Y|_Y)$.  Thus the pair $(\mathscr{L}_Y|_Y,\mathscr{V}_Y)$ is a $\grd$ on $Y$ that we call the $Y$-aspect of the series $(\mathscr{L}_{\eta},\mathscr{V}_{\eta})$.  Moreover, we call the collection of aspects 
\[ l=\{(\mathscr{L}_Y|_Y,\mathscr{V}_Y)  \mid Y \text{ component of }\mathscr{X}_0  \} \]
the limit of $(\mathscr{L}_{\eta},\mathscr{V}_{\eta})$.

Now let $X$ be a curve of compact type and set
\[ l:=\{ l_Y \text{ a }\grd \text{ on }Y \mid Y \text{ irreducible component of }X \}. \]
We say that $l$ is a crude limit linear series on $X$ if the following compatibility condition on the vanishing sequences of the sections belonging to the aspects holds: for any irreducible component $Z\neq Y$ of $X$ with $Y\cap Z=p$ and for all $i=0,\ldots,r$,
\begin{equation}\label{eq:vanishinglls}
a_i(l_Y,p) + a_{r-i}(l_Z,p)\geq d.
\end{equation}  
If equality holds in (\ref{eq:vanishinglls}), then $l$ is called a refined limit linear series.
In \autocite{EH86} Eisenbud and Harris show that the limit $(\mathscr{L}_{\eta},\mathscr{V}_{\eta})$ above is a crude limit linear series.  

Unfortunately, it is not always true that any limit linear series arises as the limit of a series $(\mathscr{L}_{\eta},\mathscr{V}_{\eta})$.  We can however establish when this holds in certain cases.  We record the relevant results in this direction which are needed for the proof of Theorem \ref{thm:refineddjdim}.  In particular, let $X=C_1\cup_p C_2$ be a curve of compact type consisting of two irreducible components $C_1$, $C_2$ of genus $g_1$ and $g_2$.  Assume the two pointed curves $(C_1,p)$ and $(C_2,p)$ are general.  Let $l=\{ (C_1,l_{C_1}),(C_2,l_{C_2}) \}$ be a limit linear series of type $\grd$ on $X$, where each aspect $l_{C_i}$ has prescribed ramification $\alpha(l_{C_i},p)$ at $p$, for $i=1,2$.  From \autocite[Proposition 1.2]{EH87} it follows that the components $(C_i,p)$ may carry an $l_{C_i}$ with prescribed ramification $\alpha_i(l_{C_i},p)$ at $p$ if and only if, for $i=1,2$:
\begin{equation}\label{eq:existencells}
\sum_{j=0}^r(\alpha_j(l_{C_i},p)+g_1-d+r)_+ \leq g_i,
\end{equation} 
where $(x)_+=\max\{x,0\}$.
In addition, the limit linear series $l$ must be refined in order to satisfy the hypotheses of the smoothability theorem of Eisenbud and Harris in \autocite[Theorem 3.4]{EH86}.  This together with the dimension theorem \autocite[Theorem 1.2]{EH87} ensure that a refined limit linear series $l$ on $X$ as above is smoothable.

In what follows we shall also use the limit linear series framework developed by Osserman in a series of articles starting with \autocite{Os}, which gives rise to equivalent structures in the case of refined limit linear series.  Below we summarise the aspects of this approach that are most important to us.

Let $B$ be a scheme, $T$ a $B$-scheme, and $\mathscr{X}\rightarrow B$ a smooth proper family of smooth curves of fixed genus $g$ with a section.  Consider pairs of the form $(\mathscr{L},\mathscr{V})$, where $\mathscr{L}$ is a line bundle of relative degree $d$ on $\mathscr{X}\times_B T$ and $\mathscr{V}\subseteq\pi_{2*}\mathscr{L}$ is a subbundle of rank $r+1$ and where $\pi_2:\mathscr{X}\times_B T\rightarrow T$ is the projection.  We say that $(\mathscr{L},\mathscr{V})$ and $(\mathscr{L}',\mathscr{V}')$ are equivalent if there exists a line bundle $\mathscr{M}$ on $T$ and an isomorphism $\varphi:\mathscr{L}\rightarrow\mathscr{L}'\otimes\pi_2^* \mathscr{M}$ with the property that $\pi_{2*}\varphi$ maps $\mathscr{V}$ into $\mathscr{V}'$.   
 We define the functor $\mathscr{G}^r_d(\mathscr{X}/B)$ of linear series of type $\grd$ by associating to each $T$ the set of equivalence classes of pairs $(\mathscr{L},\mathscr{V})$ with respect to the above equivalence relation.  This functor is represented by a scheme $G^r_d(\mathscr{X}/B)$ which is proper over $B$.
 
Now suppose $\mathscr{X}\rightarrow B$ is a flat proper family of nodal curves of compact type of fixed genus $g$ where no nodes are smoothed.  All fibres therefore have the same dual graph $\Gamma$.  Denote the irreducible component of the fibre $\mathscr{X}_t$ corresponding to the vertex $v\in V(\Gamma)$ by $\mathscr{Y}^v_t$.  Hence, for each vertex $v$ of $\Gamma$ we have a family of smooth curves $\mathscr{Y}\rightarrow B$.  To define the functor of linear series of type $\grd$ for the family $\mathscr{X}$, consider the product $\prod_v \mathscr{G}^r_d(\mathscr{Y}^v /B)$ fibred over $B$, whose $T$-valued points consist of tuples of pairs $(\mathscr{L}^v,\mathscr{V}^v)$ with $\mathscr{L}^v$ a line bundle of relative degree $d$ on $\mathscr{Y}^v\times_B T$ and $\mathscr{V}^v\subseteq\pi_{2*}\mathscr{L}^v$ a subbundle of rank $r+1$.  For a line bundle $\mathscr{L}$ on $\mathscr{X}\times_B T$, denote by $\mathscr{L}^{\vec{d}}$ the twist of $\mathscr{L}$ that has multidegree given by the vector $\vec{d}=(d_v)_{v\in V(\Gamma)}$ of positive integers.  Let $\vec{d}^v=(0,\ldots,0,d_v,0,\ldots,0)$ be the vector of integers with entry $d_v$ at $v$ and 0 elsewhere.  Moreover, given two distinct vectors $\vec{d}$ and $\vec{d}'$, let $f_{\vec{d},\vec{d'}}:\mathscr{L}^{\vec{d}}\rightarrow\mathscr{L}^{\vec{d}'}$ be the unique map obtained by performing the minimal amount of bundle twists.  We now define the functor $\mathscr{G}^r_d(\mathscr{X}/B)$: consider a $T$-valued point of $\prod_v \mathscr{G}^r_d(\mathscr{Y}/B)$, namely a tuple of pairs $(\mathscr{L}^v,\mathscr{V}^v)_{v\in V(\Gamma)}$.  Let $\mathscr{L}$ be the line bundle on $\mathscr{X}\times_B T$ of multidegree $\vec{d}^{v_0}$ induced by $\mathscr{L}^{v_0}$ and appropriate twists of $\mathscr{L}^v$, for $v\neq v_0$, where $v_0$ is a fixed vertex of $\Gamma$.   
Then $(\mathscr{L}^v,\mathscr{V}^v)_{v\in V(\Gamma)}$ belongs to $\mathscr{G}^r_d(\mathscr{X}/B)(T)$ if, for all multidegrees $\vec{d}$, the map
\[ \pi_{2*}\mathscr{L}^{\vec{d}}\rightarrow\bigoplus_v (\pi_{2*}\mathscr{L}^{v})/\mathscr{V}^{v} \]
induced by the $f_{\vec{d},\vec{d}^v}$ and restriction to $\mathscr{Y}^v$ has its $(r+1)$st vanishing locus equal to all of $T$.   This functor also is also represented by a scheme $G^r_d(\mathscr{X}/B)$ that is proper over $B$.

Suppose finally that $\mathscr{X}\rightarrow B$ is a family of curves as above, but with some nodes that are smoothed.  It follows that the dual graph of the fibres is not constant and the components $\mathscr{Y}^v$ may not be defined over all of $B$.  Assume therefore that there exists a maximally degenerate fibre over some $b_0\in B$ with dual graph $\Gamma_0$ and fix a vertex $v_0\in V(\Gamma_0)$.  If such a family satisfies additionally certain technical conditions detailed in \autocite[Definition 3.1]{Os}, then it is called a \textit{smoothing family}.  Consider tuples $(\mathscr{L},(\mathscr{V}^v)_{v\in V(\Gamma_0)})$ where $\mathscr{L}$ is a line bundle of multidegree $\vec{d}^{v_0}$ on $\mathscr{X}\times_B T$ and the $\mathscr{V}^v$ are subbundles of the twists $\pi_{2*}\mathscr{L}^{\vec{d}^v}$ of rank $r+1$, for each $v\in V(\Gamma_0)$.  The functor $\mathscr{G}^r_d(\mathscr{X}/B)$ of limit linear series of type $\grd$ is defined in this case as follows: a $T$-valued point $(\mathscr{L},(\mathscr{V}^v)_{v\in V(\Gamma_0)})$ is in $\mathscr{G}^r_d(\mathscr{X}/B)(T)$ if for an open cover $\{U_k\}_{k\in I}$ of $B$ arising from the enriched structure of the curves (see discussion in \autocite{BH}), for all $k\in I$ and for all multidegrees $\vec{d}$, the map
\[ \pi_{2*}\mathscr{L}^{\vec{d}}|_{f\circ\pi_2^{-1}(U_k)}\rightarrow \bigoplus_v \Biggl( \pi_{2*}\mathscr{L}^{\vec{d}^v}|_{f\circ\pi_2^{-1}(U_k)} \Biggr)/\mathscr{V}^v|_{f^{-1}(U_k)}, \]
induced by the local versions of the twist maps and where $f$ is the structural morphism from $T$ to $B$, has its $(r+1)$st degeneracy locus equal to the whole of $U_k$.  Finally, one can show that the definition above is independent of choice of open cover $\{U_k\}_{k\in I}$, twist maps, and vertex $v_0$.

We remark here that all constructions from above are compatible with base change and the fibre over a point $t\in B$ is a space of refined Eisenbud-Harris limit linear series when $\mathscr{X}_t$ is nodal and a space of usual linear series when $\mathscr{X}_t$ is smooth.

\subsection{Refined de Jonqui\`eres divisors for reducible curves with one node}\label{sec:degeneration}
Let $l$ be a refined limit linear series on a curve of compact type with only two components $X=C_1\cup_{p}C_2$.  Assume furthermore that this linear series is smoothable, that is there exists a family of curves $\pi:\mathscr{X}\rightarrow B$ with central fibre $\mathscr{X}_0=X$ and a linear series $(\mathscr{L}_{\eta},\mathscr{V}_{\eta})$ of type $\grd$ whose limit is $l$.  Let $Y\subset X$ be an irreducible component of $X$ and $\mathscr{E}_{\eta}=(\sigma)\in |\mathscr{L}_{\eta}|$ a divisor on $\mathscr{X}_{\eta}$, where $\sigma$ is a section of $\mathscr{X}_{\eta}$.  Multiply $\sigma$ by the unique power of $t\in B_{\eta}$ so that it extends to a holomorphic section $\sigma_Y$ of the extension $\mathscr{L}_Y$ to the whole of $\mathscr{X}$ and so that it does not vanish identically on $\mathscr{X}_0$.  We find that the limit of the divisor $\mathscr{E}_{\eta}$ on $X$ is the divisor $(\sigma_Y)|_Y$.  

Now assume that $\mathscr{E}_{\eta}$ is a refined de Jonqui\`eres divisor in $\Delta_{e,f}^{r,d}$ for $\mathscr{L}_{\eta}$ on $\mathscr{X}_{\eta}$.  In other words, $\mathscr{E}_{\eta}=\mathscr{D}_{\eta}+\mathscr{D}'_{\eta}$, where $\mathscr{D}_{\eta}$ is an effective divisor of degree $e$ on $\mathscr{X}_{\eta}$ that cuts out a sublinear series 
\[(\mathscr{L}'_{\eta},\mathscr{V}'_{\eta}):=(\mathscr{L}_{\eta}\otimes\oo_{\mathscr{X}_{\eta}}(-\mathscr{D}_{\eta}),\mathscr{V}_{\eta}\cap H^0(\mathscr{X}_{\eta},\mathscr{L}_{\eta}\otimes\oo_{\mathscr{X}_{\eta}}(-\mathscr{D}_{\eta})))\] 
of type $\grdop{r-e+f}{d-e}$ on $\mathscr{X}_{\eta}$ and that contains the effective divisor $\mathscr{D}'_{\eta}$. More precisely we use the identification $|\mathscr{D}'_{\eta}|=\p H^0(\mathscr{X}_{\eta},\mathscr{L}'_{\eta})$ to say that $\mathscr{D}'_{\eta}$ is an element of $\p\mathscr{V}'$.  Note that after possibly performing a base change and resolving the resulting singularities, the pair $(\mathscr{L}'_{\eta},\mathscr{V}'_{\eta})$ also induces a refined limit linear series $l'$ of type $\grdop{r-e+f}{d-e}$ on $X$.

We want to understand what the refined de Jonqui\`eres divisors on the central fibre $X$ look like.    To this end we make the following

\begin{defi}\label{def:refineddj}
Let $X=C_1 \cup_p C_2$ be a nodal curve of compact type with a refined limit linear series $l$ of type $\grd$.  The divisor $E=D+D'$ on $X$ with smooth support and such that $D\in X_e$ and $D'\in X_{d-e}$ is a \textit{refined de Jonqui\`eres divisor of type} $\Delta_{e,f}^{r,d}$ for $l$ if for each irreducible component $C_i$, where $i=1,2$ both conditions below hold:
\begin{enumerate}[label=\roman*)]
	\item The aspect $l_{C_i}$ has a section vanishing on $E_i=D_i + D'_i$, where $E_i$, $D_i$, $D'_i$ denote the restrictions of the divisors $E$, $D$, $D'$, respectively to $C_i$.
	\item There exists a sub-limit linear series $l'$ of $l$ of type $\grdop{r-e+f}{d-e}$ such that each aspect $l'_{C_i}$ has a section vanishing on $D'_i$, for $i=1,2$.
\end{enumerate}
We denote space of such refined de Jonqui\`eres divisors by $DJ_{e,f}(X,l)$.
\end{defi}
The sections corresponding to the aspects $l_{C_i}$ and $l'_{C_i}$ will also vanish at the node $p$ of $X$ on each component $C_i$ in such a way that equality in (\ref{eq:vanishinglls}) is satisfied.  Using this we can give an alternative description of refined de Jonqui\`eres divisors as in Definition \ref{def:refineddj} that is more useful in practice.  Thus we call $E=D+D'$ a refined de Jonqui\`eres divisor of type $\Delta
_{e,f}^{r,d}$ for a refined limit linear series $l$ of type $\grd$ on $X=C_1\cup_p C_2$ if:
\begin{enumerate}[label=\alph*)]
	\item for each $C_i$, the aspect $l_{C_i}$ admits the refined de Jonqui\`eres divisors of type $\Delta_{e,f}^{r,d}$ given by 
	\[ D_i + D'_i + (d - \deg D_i - \deg D'_i)p, \]
	\item where $D'_i + (d-e-\deg D'_i)p$ belongs to a subseries of type $\grdop{r-e+f}{d-e}$ of $l_{C_i}$, and
	\item $D_i + (e-\deg D_i)p \in (C_i)_e$.
\end{enumerate}

Using the Osserman approach to limit linear series, we construct a representable functor whose corresponding scheme parametrises refined de Jonqui\`eres divisors for a family $\mathscr{X}\rightarrow B$ of curves of genus $g$ of compact type.

\begin{prop}
Let $\mathscr{X}\rightarrow B$ be a smoothing family of curves of compact type with a linear series $\ell$ of type $\grd$. There exists a scheme $\mathcal{DJ}_{e,f}(\mathscr{X}/B,\ell)\rightarrow B$ proper over $B$, compatible with base change, whose point over every $t\in B $ parametrises tuples $[\mathscr{X}_t, \mathscr{D}_t,\mathscr{D}'_t]$ of curves and divisors 
Moreover, every irreducible component of $\mathcal{DJ}_{e,f}(\mathscr{X}/B,\ell)$ has dimension at least $\dim B + e-f(r+1-e+f)-(s-1)(r-e+f)$.
\end{prop}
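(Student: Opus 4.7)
The plan is to realise $\mathcal{DJ}_{e,f}(\mathscr{X}/B,\ell)$ as a degeneracy locus inside a relative incidence scheme, mimicking the presentation of $DJ_{e,f}(C,l)$ given in Section \ref{sec:refineddej} and now carrying it out relatively over $B$ by means of the Osserman functor framework recalled in Section \ref{sec:llsreview}.

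First I would form the relative symmetric products $\mathscr{X}^{(e)}$ and $\mathscr{X}^{(d-e)}$ over $B$, together with the scheme $G^{r-e+f}_{d-e}(\mathscr{X}/B)$ of (limit) linear series of type $\grdop{r-e+f}{d-e}$ on the family, which is proper over $B$ by Osserman's representability. Inside $G^{r-e+f}_{d-e}(\mathscr{X}/B)\times_B \mathscr{X}^{(d-e)}$ I would extract the closed incidence subscheme $\mathscr{I}$ whose point over $t\in B$ parametrises pairs $(\ell'_t,D'_t)$ with $D'_t$ belonging to $\ell'_t$ in the aspect-by-aspect sense of item ii) of Definition \ref{def:refineddj}; this is cut out by the $(r-e+f+1)$st-vanishing of the appropriate restriction map in the Osserman formalism. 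Pairing the projection $\mathscr{I}\to\mathscr{X}^{(d-e)}$ with $\mathscr{X}^{(e)}$ via the addition map $\mathscr{X}^{(e)}\times_B\mathscr{X}^{(d-e)}\to\mathscr{X}^{(d)}$ produces a relative version $\Delta_{e,f}^{r,d}(\mathscr{X}/B)$ of the incidence locus of Section \ref{sec:refineddej}.

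Next, the requirement that $D+D'$ actually lie in $\ell$ is imposed by the relative version of the degeneracy locus of the vector bundle map $\Phi:\ell\otimes\oo\to\mathscr{F}_d(L)$, extended over the nodal fibres by the $(r+1)$st-vanishing formulation of Osserman's framework. One defines $\mathcal{DJ}_{e,f}(\mathscr{X}/B,\ell)$ as the pullback of this degeneracy locus to $\Delta_{e,f}^{r,d}(\mathscr{X}/B)$. Properness over $B$ is inherited from properness of $G^{r-e+f}_{d-e}(\mathscr{X}/B)$, of the relative symmetric products, and from the closed nature of each degeneracy condition; compatibility with base change follows from the corresponding properties of all of the constituents.

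For the dimension bound I would apply the standard lower bound for degeneracy loci: since $\Phi$ has source rank $r+1$ and target rank $d$, the locus where its rank drops by at least one has codimension at most $d-r$ in $\Delta_{e,f}^{r,d}(\mathscr{X}/B)$ on every component. Combining this with the inequality $\dim\Delta_{e,f}^{r,d}(\mathscr{X}/B)\geq \dim B+e+\rho(g,r-e+f,d-e)+(r-e+f)$ coming from (\ref{eq:dimboundfirst}) applied relatively, and using $g-s=d-r$ together with the identity (\ref{eq:condition}), one obtains the claimed lower bound $\dim\mathcal{DJ}_{e,f}(\mathscr{X}/B,\ell)\geq \dim B + e - f(r+1-e+f)-(s-1)(r-e+f)$ on every irreducible component. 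The main obstacle is the nodal fibre bookkeeping: one must check that the twist conventions and node vanishing data in Osserman's formalism combine with the incidence subscheme $\mathscr{I}$ so as to cut out on a reducible fibre exactly the refined de Jonqui\`eres divisors of Definition \ref{def:refineddj}, and not spurious tuples that fail compatibility at the node.
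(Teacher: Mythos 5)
Your proposal follows essentially the same route as the paper: you build the sub-series incidence locus as a degeneracy locus in the Osserman framework (the paper's Steps 1--2, constructing $\mathscr{X}_{d-e,\ell'}$ and projecting from $Sym^{d-e}(\mathscr{X}/B)\times\mathscr{G}^{r-e+f}_{d-e}(\mathscr{X}/B)$), then impose membership in $\ell$ by a second degeneracy locus over the relative version of $\Delta_{e,f}^{r,d}$, with properness and base-change compatibility inherited from the constituents and the dimension bound coming from the determinantal codimension $d-r$ together with (\ref{eq:dimboundfirst}). Your explicit verification of the numerical identity yielding $e-f(r+1-e+f)-(s-1)(r-e+f)$ is correct and in fact spells out a step the paper leaves implicit.
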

\begin{proof}
Step 1). To begin with, we construct a subfunctor $\mathscr{X}_{d-e,\ell'}$ of the functor of points of the symmetric product $Sym^{d-e}(\mathscr{X}/B)$ parametrising those divisors $\mathscr{D}'$ belonging to a linear series $\ell'$ of type $\grdop{r-e+f}{d-e}$ on $\mathscr{X}$.  We use a degeneracy locus construction from which it follows that it is representable by a scheme that is proper over $B$ and compatible with base change.

Let $T$ be a $B$-scheme.  Suppose that all the fibres of the family are nonsingular.  Thus $\ell'$ of type $\grdop{r-e+f}{d-e}$ on $\mathscr{X}/B$ is given by a pair $(\mathscr{L}',\mathscr{V}')$ as described at the end of Section \ref{sec:llsreview}.  Denote by $\mathcal{U}\subset\mathscr{X}\times_B Sym^{d-e}(\mathscr{X}/B)$ the universal family and let $\mathcal{U}_T=\mathcal{U}\times_B T$.  Consider the diagram below
\begin{figure}[H]\centering
  \begin{tikzpicture}
    \matrix (m) [matrix of math nodes,row sep=2em,column sep=1em,minimum width=1em]
  {
     & (\mathscr{X}\times_B T)\times_B Sym^{d-e}(\mathscr{X}/B) & \supset \mathcal{U}_T \\
     \mathscr{X}\times_B T & & T\\};
  \path[-stealth]
    (m-1-2) edge node [auto,swap] {$\tau_1$} (m-2-1)
            edge node [auto]{$\tau_2$}  (m-2-3);
 \end{tikzpicture}
 \end{figure}
\noindent where $\tau_1$, $\tau_2$ are the usual projections.  We say that the $T$-valued point $[\mathscr{X},\mathscr{D}']$ belongs to $\mathscr{X}_{d-e,\ell'}(T)$ if the $(r-e+f)$-th degeneracy locus of the map $\mathscr{V}'\rightarrow\tau_{2*}(\tau_1^*\mathscr{L}'\otimes\oo_{\mathcal{U}_T})$ is the whole of $T$.  By construction, $\mathscr{X}_{d-e,\ell'}$ is compatible with base change and has the structure of a closed subscheme, hence it is a functor represented by a proper scheme.  

Next, suppose that the family $\mathscr{X}$ has nodes that are not smoothed and denote by $\Gamma$ the dual graph of the fibres.  Then, from Section \ref{sec:llsreview} we have that $\ell'$ is a tuple $(\mathscr{L}'^v,\mathscr{V}'^v)_{v\in V(\Gamma)}$, where $\mathscr{L}'$ is a line bundle of degree $d$ on $\mathscr{Y}^v\times_B T$, for all components $\mathscr{Y}^v$ of $\mathscr{X}$ and $\mathscr{V}'^v$ is a subbundle of $\mathscr{L}'^v$ of rank $r+1$ on $T$ satisfying certain conditions discussed above.  Let $\mathscr{X}'(\mathscr{Y}^v,\mathscr{V}'^v)\subset Sym^{d-e}(\mathscr{Y}^v /B)$ be defined as follows: a $T$-valued point $[\mathscr{Y}^v,\mathscr{D}'^v]$ belongs to $\mathscr{X}'(\mathscr{Y}^v,\mathscr{V}'^v)$ if the $(r-e+f)$-th degeneracy locus of the map $\mathscr{V}'^v\rightarrow \pi_{2*}(\tau_1^* \mathscr{L}'^v \otimes \oo_{\mathcal{U}_T})$ is the whole of $T$, where $\tau_1$, $\tau_2$, and $\mathcal{U}_T$ are defined for $\mathscr{Y}^v$ in the same way as for the case of only smooth fibres. 
Now let $\mathscr{D}'^v$ denote the specialisation of the divisor $\mathscr{D}'$ on $\mathscr{X}$ to the component $\mathscr{Y}^v$ and $q_i^v$ the preimages of the nodes belonging to $\mathscr{Y}^v$.
 Therefore, we say that a $T$-valued point $[\mathscr{X},\mathscr{D}']$ belongs to $\mathscr{X}_{d-e,\ell'}(T)$ if, for all vertices $v\in\Gamma$, the $T$-valued points $[\mathscr{Y}^v,\mathscr{D}'^v,q_i^v]$ belong to $\mathscr{X}'(\mathscr{Y}^v,\mathscr{V}'^v)$, where the $q_i^v$ may appear with multiplicity.  

Finally, let $\mathscr{X}\rightarrow B$ be a smoothing family.  As previously discussed, let $\Gamma_0$ be the dual graph of the maximally degenerate fibre of $\mathscr{X}$ and fix a $v_0\in V(\Gamma_0)$.  Let $\mathscr{L}'$ be a line bundle on $\mathscr{X}\times_B T$ of degree $d-e$ on the component corresponding to $v_0$ and 0 otherwise and $\mathscr{V}'^v$ subbundles of rank $r+1$ of the twists $\pi_{2*}\mathscr{L}'^{\vec{d}^v}$ satisfying the conditions mentioned in Section \ref{sec:llsreview}.
Hence we know from before that $\ell'$ is a tuple $(\mathscr{L}',(\mathscr{V}'^v)_{v\in V(\Gamma_0)})$.  Let $b_0\in B$ be the point corresponding to the maximally degenerate fibre and, for a $T$-valued point  $[\mathscr{X},\mathscr{D}']\in Sym^{d-e}(\mathscr{X}/B)(T)$, denote by $v_i$ the vertices of $\Gamma_0$ corresponding to the components of $\mathscr{X}_{b_0}$ which contain points of the support of $\mathscr{D}'$.  We call the corresponding components $\mathscr{Y}^{v_i}$.  We define the $T$-valued points of $\mathscr{X}_{d-e,\ell'}$ in the same way as for the non-smoothing case, only now we need the degeneracy locus of a different bundle map, namely  $\mathscr{V}'^{v_i}|_{\mathscr{Y}^{v_i}}\rightarrow \tau_{2*}(\tau_1^*\mathscr{L}'^{\vec{d}^{v_i}}|_{\mathscr{Y}^{v_i}}\otimes\oo_{\mathcal{U}_T})$.

Step 2). Consider now the following:
\[ \{ ([\mathscr{X},\mathscr{D}'], \ell') \mid [\mathscr{X},\mathscr{D'}]\in \mathscr{X}_{d-e,\ell'}  \}\subset Sym^{d-e}(\mathscr{X}/B)\times\mathscr{G}^{r-e+f}_{d-e}(\mathscr{X}/B). \]
Using the valuative criterion one immediately sees that the space above is closed.  Projecting onto the first factor yields therefore a subfunctor $\mathscr{X}^{r-e+f}_{d-e}$ of the functor of points $Sym^{d-e}(\mathscr{X}/B)$ parametrising relative divisors of degree $d-e$ belonging to (limit) linear series of type $\grdop{r-e+f}{d-e}$. 

Step 3).  We now repeat the construction in Step 1), where we replace $\ell'$ of type $\grdop{r-e+f}{d-e}$ with $\ell$ of type $\grd$ and $Sym^{d-e}(\mathscr{X}/B)$ by $Sym^e(\mathscr{X}/B)\times_{\mathscr{X}}\mathscr{X}^{r-e+f}_{d-e}$.
The desired functor $\mathcal{DJ}_{e,f}(\mathscr{X}/B,\ell)$ is then constructed analogously to $\mathscr{X}_{d-e,\ell'}$ which endows it with a degeneracy locus structure and yields also the dimension estimate.
\end{proof}

\begin{rem}\label{rem:3}
As mentioned before (see Remark \ref{rem:2}), our proof of Theorem \ref{thm:refineddjdim} requires the use refined de Jonqui\`eres divisors in $DJ_{e,f}(C,l)$ where the corresponding linear series $l'$ has specified ramification at least $\alpha$ at a general point of $C$.  To take that into account in our construction above, one may simply consider $\ell'$ in Step 1) as an element of $\mathscr{G}^{r-e+f}_{d-e}(\mathscr{X}/B,\{q,\alpha\})$ parametrising (limit) linear series with prescribed ramification at a section $q$ of $\mathscr{X}\rightarrow B$ (see \autocite[Definition 4.5]{Os}). 
\end{rem}

\begin{rem}\label{rem:1}
Consider the forgetful map $\phi:\mathcal{DJ}_{e,f}(\mathscr{X}/B,\ell)\rightarrow\mathscr{X}$ and assume $\mathscr{X}\rightarrow B$ is a smoothing family with central fibre given by the nodal curve $X=C_1\cup_p C_2$ from above.  By base change, this forgetful map is a projective map.  The fibre of $\phi$ over a curve $\mathscr{X}_t$, with $t\in B$, is precisely $DJ_{e,f}(\mathscr{X}_t,\ell_t)$, i.e.~the space of refined de Jonqui\`eres divisors on a usual linear series if $\mathscr{X}_t$ is smooth and the space from Definition \ref{def:refineddj} on the central fibre.  
\end{rem}

\subsection{Proof of Theorem \ref{thm:refineddjdim}}\label{sec:proofrefineddjdim}
The proof of Theorem \ref{thm:refineddjdim} works in a similar way to the proof of the dimension theorem for usual de Jonqui\`eres divisors (Theorem 1.1 in \autocite{Un}). 

\subsubsection{Proof of Theorem \ref{thm:refineddjdim} part 1.}
The argument is based on an induction on the index of speciality $s$ of the linear series $l$ while keeping the Brill-Noether number $\rho(g,r,d)$ equal to 0.  The base case is the statement of Theorem \ref{thm:refineddjdim} for the canonical linear series $K_C$, which is the unique linear system with $s=1$ and vanishing Brill-Noether number.  The fact that this dimension result holds for $K_C$ follows from the discussion in Section \ref{sec:transvers}.  In fact, we shall need a more refined version of the statement of Theorem \ref{thm:refineddjdim}, namely the one given in Remark \ref{rem:ramif} that also takes into account the imposed ramification at a point of the series $l'$ from Definition \ref{def:refineddj}.

We now describe the induction step.  We construct aspect by aspect a refined limit linear series $l$ of type $\grd$ and with index of speciality $s=g-d+r$ on a nodal curve of compact type $X=C_1\cup_{p}C_2$ of genus $g$, where $C_1$ and $C_2$ are smooth, irreducible curves of genus $g_1$, $g_2$, with $g=g_1+g_2$ and such that $(C_1,p)$ and $(C_2,p)$ are general pointed curves.
Set $g_1=(s-1)(r+1)$ and $g_2=r+1$.
  On $C_1$ we take the aspect $l_{C_1}=l_1(rp)$, where $l_1\in G^{r}_{d-r}(C_1)$ and on $C_2$ the aspect $l_{C_2}=l_2((d-2r)p)$ where $l_2=G^{r}_{2r}(C_2)$ is the canonical bundle.  Note that with this choice $s(l_1)=s-1$, $s(l_2)=1$, and $\rho(g_1,r,d-r)=0$ hence the induction works by increasing the index of speciality $s-1\mapsto s$ while keeping the Brill-Noether number fixed and equal to 0.
 The induction hypothesis, is a refined version of the statement of Theorem \ref{thm:refineddjdim} for $C_i$ equipped with $l_i$, for $i=1,2$, namely 

\begin{hypo}
Under the hypotheses of Theorem \ref{thm:refineddjdim} a), let $q\in C$ be a general point.  The dimension of the space of refined de Jonqui\`eres divisors of type $DJ_{e,f}(C,l)$ such that the corresponding linear series $l'$ of type $\grdop{r-e+f}{d-e}$ has ramification sequence at least $\alpha$ at $q$ is $e-f(r+1-e+f)-(s-1)(r-e+f)-\sum_{i=0}^{r-e+f}\alpha_i$.
\end{hypo}

The limit linear series $l$ we constructed is indeed smoothable (see \autocite[Section 4.1]{Un} for the check that (\ref{eq:existencells}) is satisfied).
Thus there exists a family of curves $\pi:\mathscr{X}\rightarrow B$ with smooth generic fibre $\mathscr{X}_{\eta}$ and central fibre $\mathscr{X}_0=X$ equipped with a linear series $(\mathscr{L}_{\eta},\mathscr{V}_{\eta})$ of type $\grd$ whose limit on the central fibre is the limit linear series $l$ constructed above.  As explained in Section \ref{sec:llsreview}, we therefore have a linear series $\ell$ on $\mathscr{X}\rightarrow B$ specialising to $l$ at $0\in B$ and to a usual $\grd$ at $\eta\in B$.
Furthermore, the space of refined de Jonqui\`eres divisors $\mathcal{DJ}_{e,f}(\mathscr{X}/B,\ell)$ restricts to $DJ_{e,f}(X,l)$ at $0\in B$.  The strategy is to show that, within the parameter bounds of the statement of Theorem \ref{thm:refineddjdim} a), $\dim DJ_{e,f}(X,l)\leq e-f(r+1-e+f)-(s-1)(r-e+f)$.  From the upper semi-continuity of fibre dimension applied to the map $\phi$ from Remark \ref{rem:1}, it follows that $\dim DJ_{e,f}(\mathscr{X}_{\eta},\ell_{\eta})\leq e-f(r+1-e+f)-(s-1)(r-e+f)$ for a smoothing of $X$ to a general curve $\mathscr{X}_{\eta}$ equipped with a general linear series $\ell_{\eta}$.  This combined with the upper bound in (\ref{eq:expdim}) yields the statement of Theorem \ref{thm:refineddjdim} a).
 
From the discussion in Section \ref{sec:degeneration} we see that if the limit of a divisor $E=D+D'$ in $\Delta_{e,f}^{r,d}$ is a refined de Jonqui\`eres divisor for the limit linear series $l$ then the following hold true:
\begin{enumerate}[label=\arabic*), wide, labelwidth=!, labelindent=0pt]
	\item On $C_1$ we have a refined de Jonqui\`eres divisor for $l_{C_1}$ of the form
		\[D_1 + D'_1 + (d-\deg D_1 - \deg D'_1)p \in l_{C_1},\]
where
\begin{align*}
D'_1 + (d-e-\deg D'_1)p &\in l'_{C_1}=g^{r-e+f}_{d-e},\\
D_1 + (e-\deg D_1)p &\in C_e.
\end{align*} 
	Recall that $l'$ is a sub-limit linear series of $l$, which in turn means that the aspect $l'_{C_1}$ is a sub-linear series of $l_{C_1}$.  Moreover, by construction we know that the ramification sequence of $l_{C_1}$ at $p$ is $(r,\ldots,r)$.  Thus $\alpha(l'_{C_1},p)\geq (r,\ldots,r)$, since the vanishing sequence of $l'_{C_1}$ is a subsequence of that of $l_{C_1}$.  Therefore $l'_{C_1}-rp\in G^{r-e+f}_{d-e-r}(C_1)$ and we conclude that we have a refined de Jonqui\`eres divisor on $C_1$ for the linear series $l_1$ as well, namely:				\[ D_1 + D'_1 + (d-r-\deg D_1 - \deg D'_1)p \in l_1, \]
where 
\begin{align}
D'_1 + (d-r-e-\deg D'_1)p &\in l'_1=g^{r-e+f}_{d-r-e},\label{eq:div1}\\
D_1 + (e-\deg D_1)p &\in C_e.\nonumber
\end{align} 
Thus from the lower bound on the dimension in (\ref{eq:expdim}) and the induction hypothesis we see that the dimension of the space of such refined de Jonqui\`eres divisors is 
\[\dim DJ_{e,f}(C_1,l_1) = e-f(r+1-e+f) - (s-2)(r-e+f).\]
	\item Similarly, on $C_2$ we have a refined de Jonqui\`eres divisor for $l_2$ of the form:
	\[ D_2 + D'_2 + (2r-\deg D_2 - \deg D'_2)p \in l_2, \]
where 
\begin{align}
D'_2 + (2r-e-\deg D'_2)p &\in l'_2=\grdop{r-e+f}{2r-e},\label{eq:div2}\\
D_2 + (e-\deg D_2)p &\in C_e.\nonumber
\end{align}
From Lemma \ref{lemma:trans} for the canonical linear series we obtain the dimension of the space of refined de Jonqui\`eres divisors corresponding to $l_2$ to be
\[ \dim DJ_{e,f}(C_2,l_2) = e-f(r+1-e+f). \]	 
\end{enumerate}

The idea is now to give an upper bound to the dimension of $DJ_{e,f}(X,l)$ by using the two spaces of refined de Jonqui\`eres divisors corresponding to $l_1$ and $l_2$. 
We observe first that
\begin{align*}
\dim DJ_{e,f}(X,l)&\leq \dim DJ_{e,f}(C_1,l_1)+ \dim DJ_{e,f}(C_2,l_2)\\
&=2(e-f(r+1-e+f)) - (s-2)(r-e+f).
\end{align*}
 In order to obtain a more accurate bound one must take into account the fact that the glueing point $p$ is in fact a base point of the linear series $l'_1=\grdop{r-e+f}{d-r-e}$ and $l'_2=\grdop{r-e+f}{2r-e}$ on $C_1$ and $C_2$, respectively.  To see this, we now show that the coefficient of $p$ in both (\ref{eq:div1}) and (\ref{eq:div2}) is always strictly positive.  Let $d_i=\deg D_i$ and $d'_i=\deg D'_i$, for $i=1,2$.  Assume first that 
\begin{equation}\label{eq:div3}
d-r-e-d'_1\leq 0\text{ and }2r-e-d'_2 \leq 0. 
\end{equation}   
Combining this with the fact that $d_1+d_2=e$ and $d'_1+d'_2=d-e$ yields
\begin{align*}
&d-r-d_1-d'_1 \leq d_2,\\
&2r-d_2-d'_2\leq d_1.
\end{align*}
Adding up the two above inequalities gives $d+r\leq d'_1 + d'_2 = d-e$, which cannot hold.  Hence at least one of the coefficients of $p$ in (\ref{eq:div1}) and (\ref{eq:div2}) must be strictly positive.  So assume now that $d'_1=d-r-e-x$, for some strictly positive $x$ and that the second  inequality in (\ref{eq:div3}) remains unchanged.  We then have that 
\begin{align*}
&d-r-d_1-d'_1 = d_2+x,\\
&2r-d_2-d'_2\leq d_1,
\end{align*}
which added up yield $x\geq e+r$.  Hence  
\[ d'_1\leq d-r-e-e-r=d-2r-2e. \]
In addition, since $d_1\leq e$, we obtain
\begin{equation}\label{eq:div4}
d_1+d'_1\leq d-2r-e.
\end{equation}
However, note that if all the points in the supports of $D$ and $D'$ specialise on only one component $C_i$, then this would contradict the imposed ramification condition at $p$ on the aspect $l_{C_i}$.  Indeed, we must have
\begin{align*}
d-2r \leq &d_1 + d'_1 \leq d-r,\\
r\leq &d_2 + d'_2 \leq 2r.
\end{align*}
But this gives rise to a contradiction with the inequality in (\ref{eq:div4}) and we conclude that the coefficient of $p$ in (\ref{eq:div2}) must also be strictly positive.  A similar computation gives the same conclusion if we start with the assumption that $d'_2<2r-e$ and $d-e-r-d'_1\leq 0$.

Thus $p$ is a base point of the two series $l'_1$ and $l'_2$ on $C_1$ and $C_2$, respectively, so it follows that both $\alpha(l'_1,p)$ and $\alpha(l'_2,p)$ are at least $(1,1,\ldots,1)$.  This together with the induction hypothesis yield the following bound for the dimension of $DJ_{e,f}(X,l)$:
\begin{align}
\dim DJ_{e,f}(X,l)&\leq e-(r+1-e+f)-(s-2)(r-e+f)-\sum_{i=0}^{r-e+f}\alpha_i(l'_1,p)\nonumber \\
&+ e-f(r+1-e+f)-\sum_{i=0}^{r-e+f}\alpha_i(l'_2,p)\nonumber\\ 
&\leq 2(e-f(r+1-e+f)) - s(r-e+f)-2.\label{eq:bound1}
\end{align}
Hence, if $e-f(r+1-e+f)\leq r-e+f+2$, then from (\ref{eq:bound1}) we get
\[\dim DJ_{e,f}(X,l)\leq e-f(r+1-e+f)-(s-1)(r-e+f).\]
In addition, for a general point $q$ belonging to any of the components $C_i$ at which the aspect $l'_i$ has ramification sequence at least $\alpha$, we have that
 \[\dim DJ_{e,f,\alpha}(X,l)\leq e-f(r+1-e+f)-(s-1)(r-e+f)-\sum_{i=0}^{r-e+f}\alpha_i.\]
Thus, using upper semi-continuity of fibre dimension and the lower bound for the dimension of refined de Jonqui\`eres divisors with imposed ramification at a point $q$ from Remark \ref{rem:2} we see that $DJ_{e,f,\alpha}(C,l)$ is indeed of expected dimension for a general curve $(C,q)$.  This concludes the induction step.

\subsubsection{Proof of Theorem \ref{thm:refineddjdim} part 2.}
The proof for the second part of the theorem is also by induction and follows the same idea as the for Theorem \ref{thm:refineddjdim} part 1.  The only difference is that this time, the induction step increases the Brill-Noether number $\rho(g,r,d)$ while keeping the index of speciality $s$ fixed.  The base case is the statement of Theorem \ref{thm:refineddjdim} 1. and Lemma \ref{lemma:trans}.

The induction step works as follows:  as before, we construct a refined linear series $l$ of type $\grd$ on a nodal curve of compact type $X$ aspect by aspect.  We take $X=C_1\cup_p C_2$ of genus $g$, where $(C_1,p)$ and $(C_2,p)$ are general pointed curves of genus $g_1=g-1$ and $g_2=1$, respectively.  On $C_1$ consider the aspect $l_{C_1}=l_1(p)$ with $l_1\in G^{r}_{d-1}(C_1)$ and on $C_2$ the aspect $l_{C_2}=l_2((d-r-1)p)$ with $l_2\in G^r_{r+1}(C_2)$.  Note that now $s(l_1)=s$ while $\rho(g-1,r,d-1)=\rho(g,r,d)-1$.  Thus the induction increases the Brill-Noether number $\rho(g,r,d)-1\mapsto\rho(g,r,d)$ while keeping the index of speciality $s$ fixed (this follows because $s(l_1)=(g-1)-(d-1)+r=s=s(l)$).  The induction hypothesis is analogous to the one from the proof of Theorem \ref{thm:refineddjdim} part 1.

As before, one can easily check that the limit linear series $l$ is smoothable (see \autocite[Section 4.2]{Un}) so that there exists a family of curves $\mathscr{X}\rightarrow B$ with smooth generic fibre $\mathscr{X}_{\eta}$ and central fibre given by our $X$ and equipped with a linear series $(\mathscr{L}_{\eta},\mathscr{V}_{\eta})$ whose limit on $X$ is $l$ constructed above.  The strategy is to again show that $DJ_{e,f}(X,l)\leq e-f(r+1-e+f)-(s-1)(r-e+f)$ and conclude via upper semi-continuity that $\dim DJ_{e,f}(\mathscr{X}_{\eta},\ell_{\eta})\leq e-f(r+1-e+f)-(s-1)(r-e+f)$ for a smoothing of $X$ to a general curve $\mathscr{X}_{\eta}$ with a general $\ell G^r_d(\mathscr{X}_{\eta})$.  The statement of Theorem \ref{thm:refineddjdim} part 2. then follows.

As before, we have that if the limit of $E=D+D'\in\Delta^{r,d}_{e,f}$ is a refined de Jonqui\`eres divisor for $l$ if the following are satisfied:

\begin{enumerate}[label=\arabic*), wide, labelwidth=!, labelindent=0pt]
	\item On $C_1$ we have a refined de Jonqui\`eres divisor for $l_{C_1}$ of the form
	\[ D_1 + D'_1 + (d-\deg D_1-\deg D'_1)p\in l_{C_1}, \]
	where  
	\begin{align*}
	D'_1 + (d-e-\deg D'_1)p &\in l'_{C_1}=\grdop{r-e+f}{d-e},\\
	D_1 +(e-\deg D_1)p &\in C_e.
	\end{align*}
	Since $l'_{C_1}$ is a sub-linear series of $l_{C_1}$, we immediately have that $\alpha(l'_{C_1},p)\geq (1,1,\ldots,1)$.  Thus $l'_{C_1}-p\in G^{r-e+f}_{d-1-e}(C_1)$ and we therefore have a refined de Jonqui\`eres divisor on $C_1$ for $l_1$: 
	\[	D'_1 + (d-1-e-\deg D_1 - \deg D'_1)p\in l_1, \]
	where
	\begin{align*}
	D'_1 + (d-1-e-\deg D'_1)p &\in l'_1 = \grdop{r-e+f}{d-1-e},\\
	D_1 + (e-\deg D_1)p &\in C_e.
	\end{align*}
	Hence, using the induction hypothesis, we obtain:
	\[\dim DJ_{e,f}(C_1,l_1)=e-f(r+1-e+f)-(s-1)(r-e+f).\]
	\item Analogously, on $C_2$ we have the following refined de Jonqui\`eres divisor for $l_2$:
	\[ D_2 + D'_2 + (d-r-1-\deg D_2 - \deg D'_2 )p\in l_2, \]
	where
	\begin{align*}
	D'_2 + (r+1-e-\deg D'_2)p &\in l'_2=\grdop{r-e+f}{r+1-e}\\
	D_2 + (e-\deg D_2)p\in C_e.
	\end{align*}
	Lemma \ref{lemma:trans} for non-special linear series then yields
	\[ \dim DJ_{e,f}(C_2,l_2)=e-f(r+1-e+f)+r-e+f. \]
\end{enumerate}
Arguing like for the previous case, one sees that $p$ is a base point of the linear series $l'_1$ and $l'_2$ and this implies that both $\alpha(l'_1,p)$ and $\alpha(l'_2,p)$ are at least $(1,1,\ldots,1)$.  Thus, from the induction hypothesis we obtain the same bound for the dimension of $DJ_{e,f}(X,l)$ as before, namely:
\[ \dim DJ_{e,f}(X,l)\leq 2(e-f(r+1-e+f))-s(r-e+f)-2. \]
We then argue just like in the previous case to conclude the induction step.

\printbibliography{}
\end{document}